\documentclass[oneside,12pt]{amsart}
\usepackage{eurosym}
\usepackage{amsfonts} 
\usepackage{amsmath,amssymb,amsthm,xcolor}
\usepackage{latexsym}
\usepackage{float}
\usepackage{placeins}
\usepackage{bm}
\usepackage[utf8]{inputenc}
\usepackage[british]{babel}
\usepackage{a4wide}
\usepackage{pdfsync}
\usepackage[colorlinks=true,linkcolor=blue,citecolor=blue]{hyperref}
\usepackage[all]{xy}
\usepackage{parskip}
\usepackage{multirow}
\usepackage{array}
\usepackage[toc,page]{appendix}
\usepackage{graphicx}
\usepackage{comment}
\usepackage[mathscr]{euscript}
\usepackage{enumitem}
\usepackage{orcidlink}

\theoremstyle{plain}

\newtheorem{maintheorem}{Theorem}

\newtheorem*{problem}{Problem}

\newtheorem{lemma}{Lemma} 

\newtheorem*{theorem*}{Theorem}

\theoremstyle{definition} 

\newtheorem*{definition*}{Definition}
\newtheorem{remark}[lemma]{Remark}
\newtheorem{example}[lemma]{Example} 

\theoremstyle{remark}

\newcommand{\produ}[1]{\langle #1\rangle}
\newcommand{\R}{\mathbb{R}}

\renewcommand{\div}{\mathrm{div}}

\newcommand{\Tr}{\mathrm{Tr}}
\newcommand{\Scal}{\mathrm{Scal}}
\newcommand{\Ric}{\mathrm{Ric}}
\newcommand{\Id}{\mathrm{Id}}

\begin{document}

\title[A class of GKS determined by the Ricci tensor and the metric]{A class of generalised Killing spinors determined by the Ricci tensor and the metric}

 \author[Diego Artacho]{Diego Artacho \orcidlink{0000-0003-2345-5043}} 
\author[Jihun Kim]{Jihun Kim \orcidlink{0000-0001-5501-1207}}

\address{Department of Mathematics, KU Leuven, Celestijnenlaan 200B, 3001 Leuven, Belgium}
\email{diego.artachodeobeso@kuleuven.be, jihun.kim@kuleuven.be}

\thanks{This work has been partially supported by FWO and FNRS under EOS project G0I2222N} 

\subjclass[2020]{53C27, 53C25, 53C21, 22E60}
\keywords{Generalised Killing spinors, special spinors, Ricci curvature}

\begin{abstract}
We introduce a class of generalised Killing spinors, termed affine Killing spinors (AKS), for which the associated endomorphism is a constant linear combination of the Ricci endomorphism and the identity map. We classify Riemannian spin manifolds admitting an AKS under two additional curvature hypotheses: harmonic curvature and local conformal flatness. Furthermore, we characterise Riemannian spin manifolds that admit a non-zero parallel one-form and an AKS. Additionally, we prove that in dimension three every curvature-homogeneous manifold carrying an AKS is locally homogeneous. Finally, we provide a complete classification of three-dimensional Lie groups equipped with a Bianchi metric admitting an invariant AKS.  
\end{abstract}

\maketitle

\section{Introduction and main results}\label{s:int}

Let $(M,g)$ be a Riemannian manifold admitting a spin structure, and let $\Sigma M$ be its spinor bundle. A non-zero section $\psi \in \Gamma(\Sigma M)$ is said to be a \emph{generalised Killing spinor} (GKS) if there exists a $g$-symmetric endomorphism field $A \in \mathrm{End}(TM)$ such that 
\begin{equation*}\label{eq:gks}
\nabla_X \psi = A(X) \cdot \psi , \qquad \text{for all } X \in \Gamma(TM) , 
\end{equation*}
where $\nabla$ is the connection on $\Sigma M$ induced by the Levi-Civita connection of $g$ and $\cdot$ denotes Clifford multiplication. 
It is clear that a GKS necessarily has constant length with respect to the Hermitian inner product induced by $g$ on $\Sigma M$. These spinors arise naturally as restrictions of parallel spinors to hypersurfaces, with the corresponding endomorphism given by one half of the shape operator~\cite{BGM,AMM}. 

The existence of a GKS as an intrinsic object imposes restrictions on the geometry of the manifold. For instance, if $A = \mu \, \Id$ for some constant $\mu \in \mathbb{C}$, $\psi$ is said to be a \emph{Killing spinor} (KS), and the existence of such a spinor forces $g$ to be an Einstein metric~\cite{BFGK}. Besides the identity map, one of the simplest geometrically defined symmetric endomorphisms on a given Riemannian manifold is the Ricci endomorphism, denoted by $\Ric$. Recently, GKS with associated endomorphism $A = \lambda \, \Ric$ for a non-zero constant $\lambda \in \R$, called \emph{Ricci Killing spinors} (RKS), have been studied by Imada~\cite{I26}. It is therefore natural to investigate a wider class of GKS, which is in some sense the affine closure of the class of KS and RKS: 

\begin{definition*}
    On a Riemannian spin manifold $(M,g)$, an \emph{affine Killing spinor} (AKS) is a non-zero spinor $\psi$ that satisfies 
     \begin{equation}\label{eq:ars}
         \nabla_X \psi = (\lambda \, \Ric + \mu \, \Id )(X) \cdot \psi
     \end{equation}
    for all $X\in \Gamma(TM)$ and some constants $\lambda\in\R \setminus \{0\} ,\,\mu \in \mathbb{R}$.
\end{definition*}

A similar class of special spinors was introduced in~\cite[Def.~3.1]{KF}: on a spin $n$-manifold with $n \ge 3$ and nowhere-vanishing scalar curvature $\Scal$, a spinor $\psi$ is said to be a \emph{weak Killing spinor} (WKS) if it satisfies the equation
\[
\nabla_X \psi = \tfrac{n}{2(n-1)\Scal}{\rm d}\Scal(X)\psi + \tfrac{1}{2(n-1)\Scal}X\cdot {\rm d}\Scal \cdot \psi + \left\{\tfrac{2\sigma}{(n-2)\Scal}\Ric(X) - \tfrac{\sigma}{n-2}X\right\}\cdot\psi
\]
for all $X\in \Gamma(TM)$ and some constant $\sigma\in \mathbb{R}$.
In the special case where the scalar curvature is constant, WKS are AKS with $2 \mu = - \lambda \, \Scal$ in~\eqref{eq:ars}. 
This motivates the extension of several results of~\cite{KF} concerning WKS in the non-zero constant scalar curvature regime to the broader class of AKS. 

In what follows, we present our main results and provide the context behind them.

Our first result establishes strong geometric restrictions imposed by the existence of an AKS in the settings of harmonic curvature and local conformal flatness:

\begin{maintheorem}\label{thm:main}
    Let $(M,g)$ be a Riemannian spin manifold admitting an AKS.
    \begin{enumerate}[label={\rm{(\alph*)}}]
        \item\label{it:har} If $(M,g)$ has harmonic curvature, then it is either Einstein {\rm(}and hence the spinor is a KS{\rm)} or locally isometric to a Riemannian product of an Einstein manifold and a Ricci-flat manifold. Moreover, if the Ricci-flat factor has dimension one, then the spinor is an RKS.  
        \item\label{it:lcf} If $(M,g)$ is locally conformally flat, then it is locally isometric to either a space form or a Riemannian product of a space form and a real line. 
    \end{enumerate}
\end{maintheorem}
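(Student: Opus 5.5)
The plan is to extract from \eqref{eq:ars} an algebraic identity for $\Ric$ with constant coefficients, and then use the curvature hypotheses to turn it into a splitting. Write $A=\lambda\,\Ric+\mu\,\Id$ and $T=\operatorname{tr}A=\lambda\,\Scal+n\mu$. Differentiating $\nabla_X\psi=A(X)\cdot\psi$ once more gives
\[
R(X,Y)\psi=\big((\nabla_XA)Y-(\nabla_YA)X\big)\cdot\psi+\big(A(Y)\cdot A(X)-A(X)\cdot A(Y)\big)\cdot\psi .
\]
Contracting with the standard identity $\Ric(X)\cdot\psi=-2\sum_i e_i\cdot R(e_i,X)\psi$ then yields the usual integrability condition for a GKS. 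It says that $\Ric(X)-4TA(X)+4A^2(X)$, Clifford-multiplied on $\psi$, equals a first-order expression in $\nabla A$. That expression involves $\mathrm{d}T$, $\operatorname{div}A$ and the Codazzi tensor $d^\nabla A$. Separating the real and imaginary parts of $\langle\,\cdot\,\psi,\psi\rangle$, and using that $\psi$ has constant length, isolates the symmetric algebraic part.

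\textbf{Part (a).} Harmonic curvature means $\Ric$ is Codazzi, so $A$ is Codazzi. The contracted Bianchi identity then gives $\Scal$ constant, hence $T$ constant and $\operatorname{div}A=0$. All derivative terms drop out, and we obtain $\Ric=4TA-4A^2$. This is a quadratic polynomial in $\Ric$ with constant coefficients, so $\Ric$ has at most two eigenvalues, both constant.

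If there is one eigenvalue, $g$ is Einstein, so $A$ is a constant multiple of $\Id$ and $\psi$ is a KS. Otherwise we have two constant eigenvalues $\rho_1,\rho_2$. Eigendistributions of a Codazzi tensor with constant eigenvalues are parallel (Derdziński's argument: the Codazzi equation forces each eigendistribution to be totally geodesic and integrable). De Rham's theorem then gives a local product $M_1\times M_2$ with $\Ric=\rho_i$ on $TM_i$.

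To see that one factor is Ricci-flat, set $a_i=\lambda\rho_i+\mu$. Take $X$ tangent to $M_1$ and $Y$ tangent to $M_2$, both parallel lifts, so $R(X,Y)=0$. Then
\[
R(X,Y)\psi=a_1a_2\,(Y\cdot X-X\cdot Y)\cdot\psi=-2a_1a_2\,X\cdot Y\cdot\psi=0 .
\]
Clifford multiplication by $X\cdot Y$ is invertible, so $a_1a_2=0$; say $a_2=0$. The quadratic relation gives $\rho_2=4a_2(T-a_2)=0$, so $M_2$ is Ricci-flat and $M_1$ is Einstein. Since $\rho_2=0$, $a_2=0$ means $\mu=0$, so $A=\lambda\,\Ric$ in this case and $\psi$ is an RKS. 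This holds in particular when the Ricci-flat factor is one-dimensional.

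\textbf{Part (b).} The aim is to reduce to (a). For $n\ge4$, local conformal flatness means $W=0$. Then harmonic curvature is equivalent to the Schouten tensor being Codazzi, and this holds exactly when $\Scal$ is constant. For $n=3$ the same equivalence holds with the Cotton tensor in place of $W$.

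So the key step is to prove $\Scal$ constant using only the spinor. I would first take the full integrability condition, without assuming Codazzi, and pair it with $\psi$. Its imaginary part should give a relation of the form $\mathrm{d}T=c\,\operatorname{div}A$ plus terms that are algebraic in $A$ and $\psi$. Combined with $\operatorname{div}\Ric=\tfrac12\mathrm{d}\Scal$, this should force $(\lambda/2-c\lambda)\,\mathrm{d}\Scal$ to be controlled; for generic constants it forces $\mathrm{d}\Scal=0$. As a backup, one can use the Dirac identities $D\psi=-T\psi$ and the Lichnerowicz formula $D^2=\nabla^*\nabla+\Scal/4$. Together with $|\nabla\psi|^2=|A|^2|\psi|^2$, they give $\Scal$ as a polynomial in $T$ and $\operatorname{tr}A^2$, which one can try to differentiate against the previous relation. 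I expect this step, proving $\Scal$ constant in the non-Codazzi setting (including the degenerate choices of $\lambda,\mu$), to be the main obstacle.

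Once $\Scal$ is constant, (a) applies. An Einstein LCF manifold is a space form. A product of an Einstein manifold and a Ricci-flat manifold is LCF only if either it is flat, or the Ricci-flat factor is one-dimensional and the Einstein factor has constant curvature. This uses the classical classification of conformally flat products: $N^p\times\R^q$ with $q\ge2$ must be flat. It gives a space form times a real line.
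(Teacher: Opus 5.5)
The genuine gap is in part (b). Your whole reduction to (a) rests on $\Scal$ being constant, and you do not establish this. You only say that pairing the integrability condition with $\psi$ ``should'' give $\mathrm{d}T=c\,\div A$ plus algebraic terms, and that this works ``for generic constants'', while flagging possible degenerate $\lambda,\mu$. That is not an argument. Your backup, as formulated, does not close the gap either. Differentiating $\tfrac14\Scal=T^2-\Tr A^2$ does not suffice by itself, because $\Tr A^2$ contains $\lambda^2\|\Ric\|^2$, which is not yet known to be constant.

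The missing fact is the exact identity $\div A+\mathrm{d}(\Tr A)=0$ (Lemma~\ref{lem:basiclem}\ref{it:b3}), with no algebraic remainder. It comes out of the very Lichnerowicz computation you propose, if you read off the vector part instead of the scalar part. Indeed, $D\psi=-T\psi$ gives $D^2\psi=-\mathrm{d}T\cdot\psi+T^2\psi$. On the other hand, $\nabla^*\nabla\psi=\div A\cdot\psi+\Tr(A^2)\,\psi$, with $\div A=-\sum_i(\nabla_{e_i}A)e_i$. Hence
\[
(\mathrm{d}T+\div A)\cdot\psi=\big(T^2-\Tr A^2-\tfrac14\Scal\big)\psi .
\]
Taking $\Re\produ{\cdot,\psi}$ kills the left-hand side, so the real function on the right vanishes. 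Then $(\mathrm{d}T+\div A)\cdot\psi=0$ forces $\mathrm{d}T+\div A=0$. For $A=\lambda\Ric+\mu\Id$ and $\div\Ric=-\tfrac12\mathrm{d}\Scal$, this reads $\tfrac{\lambda}{2}\,\mathrm{d}\Scal=0$. So $\Scal$ is constant for every $\lambda\neq0$ and every $\mu$, and there are no degenerate cases. This is Lemma~\ref{lem:AKSbasic}, which the paper invokes at exactly this point. With it, the rest of your (b) is fine and agrees with the paper: local conformal flatness plus constant $\Scal$ gives harmonic curvature, and then you apply the classification of conformally flat products.

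Part (a) is correct, and its last step is a genuine improvement on the paper's route. The paper first proves Lemma~\ref{lem:prod} via $-\Ric(X)\cdot\psi=\sum_i e_i\cdot\mathcal R(X\wedge e_i)\cdot\psi$. It then separately excludes $\Scal=\frac{\mu}{\lambda}(1-n)$ by comparing two expressions for the nonzero eigenvalue, which only works when the Ricci-flat factor is one-dimensional. You instead combine $a_1a_2=0$ with the eigenvalue relation $\rho=4a(T-a)$ coming from $\Ric=4TA-4A^2$. This shows at once that the factor with $a_i=0$ is Ricci-flat and that $\mu=a_i=0$. Hence the spinor is an RKS whenever $g$ is not Einstein, whatever the dimension of the Ricci-flat factor, which is stronger than the statement's ``Moreover'' clause.
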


We recall that a Riemannian manifold has harmonic curvature if and only if its Ricci endomorphism is Codazzi. The proof of Theorem~\ref{thm:main}\ref{it:har} shows that, assuming $\Ric$ is Codazzi, the existence of an AKS implies that $\Ric$ has at most two distinct eigenvalues, and that they are constant. It is well known that the eigendistributions of a Codazzi endomorphism with constant eigenvalues are integrable and their leaves are totally geodesic~\cite[Rem. 1.2]{D81}.  We then show that the existence of an AKS also allows one to upgrade this to a local Riemannian product decomposition. 

Besides being a natural class of GKS, another reason to study AKS is that, by Lemma~\ref{lem:AKSbasic}, their existence forces not only the scalar curvature but also the squared norm of the Ricci endomorphism to be constant.
These are hypotheses that often appear in the literature: for instance, Cheng--Ishikawa--Shiohama~\cite{CIS01} and He--Li~\cite{HL18} proved the non-existence of \emph{compact} locally conformally flat Riemannian $n$-manifolds ($n \ge 3$) with constant scalar curvature and constant norm of the Ricci endomorphism, provided that the Ricci endomorphism has exactly three distinct eigenvalues \emph{at every point}. In contrast, in this article we establish a non-existence result for locally conformally flat spin $n$-manifolds ($n \ge 3$, \emph{not necessarily compact}) admitting an AKS under the assumption that the Ricci endomorphism has \emph{at least} three distinct eigenvalues \emph{at some point} -- see Theorem~\ref{thm:main}\ref{it:lcf}.

We now turn to our next result.
For an $n$-dimensional Riemannian spin manifold $M$ admitting an AKS $\psi$ satisfying~\eqref{eq:ars}, the Dirac operator $D$ acts on $\psi$ as 
\[D\psi = \sum\limits_{i=1}^{n}e_i\cdot \nabla_{e_i}\psi= -\Tr(\lambda\Ric + \mu\Id)\psi= -(\lambda\Scal+n\mu)\psi,\]
where $(e_1,\dots,e_n)$ is a local orthonormal frame of $M$.
Hence, any AKS $\psi$ is an eigenspinor of $D$ with eigenvalue $-\lambda\Scal-n\mu$. The problem of determining optimal lower bounds for the eigenvalues of the Dirac operator on compact manifolds was first considered by Friedrich in 1980~\cite{Fr}: every eigenvalue $\rho$ of $D$ satisfies $\rho^2 \ge \frac{n}{4(n-1)}\inf_{M}\Scal$. He also showed that Killing spinors are precisely those eigenspinors of $D$ that realise the lowest possible eigenvalue.
Since then, considerable effort has been devoted to improving this inequality under various geometric assumptions, including conditions on the scalar curvature and the existence of a parallel one-form -- see e.g.~\cite{AGI,Ch}.

In~\cite[Thm.~4.5]{KF}, it was shown that a Riemannian manifold with non-zero constant scalar curvature and a non-zero parallel one-form cannot admit a WKS. Since, as noted above, a Riemannian spin manifold with a WKS and non-zero constant scalar curvature is a special case of a manifold admitting an AKS, we consider manifolds admitting an AKS together with a non-zero parallel one-form: 
\begin{maintheorem}\label{thm:p1form}
    Let $(M,g)$ be an $n$-dimensional Riemannian spin manifold admitting an AKS as in~\eqref{eq:ars}, and suppose that $M$ admits a non-zero parallel one-form. Then, exactly one of the following holds:
     \begin{enumerate}[label={\rm{(\alph*)}}]
        \item\label{it:p1form1} $(M,g)$ is Ricci-flat and the spinor is parallel.
        \item\label{it:p1form2} $\Scal \neq 0$, the spinor is an RKS, and $\lambda^2 > \frac{n}{4(n-1)\Scal}$.
        \item\label{it:p1form3} $n \neq 3$, $\Scal = \frac{\mu}{\lambda}(1-n) \neq 0$, and $\mu^2 > \frac{n}{4(n-1)}\Scal$.
    \end{enumerate}
\end{maintheorem}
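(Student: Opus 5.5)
The plan is to combine three ingredients: the integrability condition of \eqref{eq:ars} in the direction of the parallel one-form, the algebraic identity relating $\Scal$ to $A:=\lambda\Ric+\mu\Id$, and the pointwise Friedrich-type inequality.

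\textbf{Step 1: set-up.} Normalise the parallel one-form and let $\xi$ be the dual unit vector field. Since $\xi$ is parallel, $R(X,Y)\xi=0$. Hence $\Ric(\xi)=0$ and $A\xi=\mu\xi$. Moreover, every curvature term $R(X,\xi,\cdot,\cdot)$ vanishes, so the spinorial curvature satisfies $R^{\Sigma}(X,\xi)\psi=0$ for all $X$. I will use Lemma~\ref{lem:AKSbasic}, which says that $\Scal$ and $|\Ric|^2$ are constant. I will also use the standard identity for a GKS,
\[
\Scal = 4\big((\Tr A)^2-|A|^2\big),
\]
obtained by comparing $D^2\psi$ with the Schrödinger--Lichnerowicz formula. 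Here $\Tr A=\lambda\Scal+n\mu$.

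\textbf{Step 2: the key dichotomy.} Differentiate \eqref{eq:ars} twice to get
\[
R^\Sigma(X,Y)\psi=\big((\nabla_XA)Y-(\nabla_YA)X\big)\cdot\psi+(AY\cdot AX-AX\cdot AY)\cdot\psi .
\]
Now set $Y=\xi$. Because $\nabla\xi=0$ and $\Ric\xi=0$, we have $(\nabla_XA)\xi=0$ and $(\nabla_\xi A)X=\lambda(\nabla_\xi\Ric)X$. This yields
\[
0=\lambda(\nabla_\xi\Ric)X\cdot\psi+\mu(\xi\cdot AX-AX\cdot\xi)\cdot\psi .
\]
Next, Clifford-multiply by $e_i$ with $X=e_i$ and sum over $i$. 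The first term contributes $-\lambda\,\xi(\Scal)\psi=0$, since $\Scal$ is constant. The second term reduces, using the Clifford relations and $A\xi=\mu\xi$, to a non-zero multiple of $\mu(\Tr A-\mu)\,\xi\cdot\psi$. Since $\xi\cdot\psi\neq0$, this gives
\[
\mu\big(\lambda\Scal+(n-1)\mu\big)=0 .
\]
I expect the bookkeeping of signs and factors in this contraction to be the main technical point. The structural outcome should nevertheless be exactly this dichotomy.

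\textbf{Step 3: case analysis.}
\begin{itemize}
\item If $\mu=0$, the spinor is an RKS. If in addition $\Scal=0$, the identity in Step 1 gives $0=-4\lambda^2|\Ric|^2$. So $\Ric=0$ and $\psi$ is parallel, which is case~\ref{it:p1form1}.
\item If $\mu\neq0$, then $\Scal=\frac{\mu}{\lambda}(1-n)\neq0$ (note $n\ge2$ here) and $\Tr A=\mu$.
\end{itemize}
To exclude $n=3$ in the second case, I use that locally $M=\R\times\Sigma^2$. Then $\Ric=\frac{\Scal}{2}(\Id-\xi\otimes\xi)$, so on $\xi^\perp$ the endomorphism $A$ acts as $\frac{\lambda\Scal}{2}+\mu=0$. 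Therefore $|A|^2=\mu^2=(\Tr A)^2$, and the identity forces $\Scal=0$, a contradiction.

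\textbf{Step 4: the inequalities.} By Cauchy--Schwarz, $|A|^2\ge(\Tr A)^2/n$, so
\[
\Scal\le\frac{4(n-1)}{n}(\Tr A)^2 ,
\]
with equality iff $A$ is a multiple of $\Id$. Equality would make $\Ric$ a multiple of $\Id$, hence zero because $\Ric\xi=0$, contradicting $\Scal\neq0$. So the inequality is strict whenever $\Scal\ne0$.
\begin{itemize}
\item In the RKS case with $\Scal\neq0$, we have $\Tr A=\lambda\Scal$. This gives $\lambda^2\Scal^2>\frac{n}{4(n-1)}\Scal$, which is the bound in~\ref{it:p1form2}.
\item In the other case, $\Tr A=\mu$ gives $\mu^2>\frac{n}{4(n-1)}\Scal$, which is~\ref{it:p1form3}.
\end{itemize}

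Exclusivity is immediate: in~\ref{it:p1form1} we have $\Scal=0$, while in \ref{it:p1form2} and \ref{it:p1form3} $\Scal\neq0$. Moreover, \ref{it:p1form2} has $\mu=0$ whereas \ref{it:p1form3} has $\mu\neq0$.
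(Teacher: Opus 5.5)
Your proof is correct, and its skeleton is the paper's: derive $\mu(\lambda\Scal+(n-1)\mu)=0$, split into cases, and get strict inequalities from Cauchy--Schwarz together with $\Ric\xi=0$. The contraction in Step~2 does close up, with $\sum_i e_i\cdot(\xi\cdot Ae_i-Ae_i\cdot\xi)=2(\Tr A-\mu)\,\xi$. The first term should carry $-\lambda$ rather than $+\lambda$. This is harmless, since it contracts to a multiple of $\xi(\Scal)\psi=0$.

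The real difference is that you replace the paper's three citations by direct computations.
\begin{itemize}
\item For the dichotomy, the paper uses $D\nabla_\xi=\nabla_\xi D$ from \cite[Lem.~4.5]{KF} and computes $D(\nabla_\xi\psi)$ in two ways. For parallel $\xi$ one has $[D,\nabla_\xi]\psi=\sum_i e_i\cdot R^{\Sigma}(e_i,\xi)\psi$. So your contraction is the same identity written out explicitly.
\item For the bounds, the paper quotes \cite[Thm.~A]{I26} in the RKS case, and in case~(c) combines \eqref{eq:bas3} with $\Scal^2\le n\|\Ric\|^2$. Your Cauchy--Schwarz estimate on $A$ via Lemma~\ref{lem:basiclem}(d) treats both cases at once as $\Scal<\frac{4(n-1)}{n}(\Tr A)^2$. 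This is exactly the strict Friedrich bound for the eigenvalue $-\Tr A$ remarked on after the theorem. It also disposes of the equality case more directly than the paper's Killing-spinor integrability argument.
\item The genuinely different step is $n=3$. The paper notes that the spinor is then a WKS and invokes the non-existence theorem \cite[Thm.~4.5]{KF}. You instead observe that $\Ric=\frac{\Scal}{2}(\Id-\xi\otimes\xi)$ forces $A=\mu\,\xi\otimes\xi$, hence $\Scal=4(\mu^2-\mu^2)=0$. This is a short self-contained proof, and it shows why $n=3$ is singled out. There $\xi^\perp$ is two-dimensional, so $\Ric|_{\xi^\perp}$ is automatically scalar, and $\lambda\Scal=-2\mu$ makes $A$ vanish on $\xi^\perp$. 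It is the same mechanism the paper uses at the end of the proof of Theorem~\ref{thm:main}\ref{it:har} for an Einstein manifold times a line.
\end{itemize}
The paper's route buys brevity and a link to the WKS literature; yours buys independence from external results.

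One caveat, shared with the paper: your remark ``note $n\ge2$ here'' is a standing assumption, not a consequence. For $n=1$ the dichotomy is vacuous, and $\R$ carries Killing spinors with $\mu\neq0$. These are non-parallel AKS on a flat manifold, so none of (a)--(c) holds. The paper uses $n\ge2$ tacitly in the step ``if $\Scal=0$ then $\mu=0$''.
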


Note that, in all three cases, the Dirac eigenvalue of the AKS satisfies Friedrich's bound, even without assuming compactness. Moreover, the inequality is strict whenever $(M,g)$ is not Ricci-flat.

Additionally, we show that the existence of an AKS is related to homogeneity properties of the manifold. Curvature homogeneity, a notion introduced by Singer~\cite{S60}, is a weaker condition than local homogeneity. A Riemannian manifold $(M,g)$ is said to be \emph{curvature homogeneous} if, for every pair of points $p,q\in M$, there exists a linear isometry $\Phi\colon T_pM \to T_qM$ such that $\Phi^*\mathrm{R}_q=\mathrm{R}_p$, where $\mathrm{R}$ denotes the Riemann curvature tensor. In dimension $2$, curvature homogeneity forces the Gaussian curvature to be constant; consequently, a curvature-homogeneous surface is necessarily locally homogeneous. In higher dimensions, however, there are examples of curvature-homogeneous manifolds that are not locally homogeneous. The first one was given by Takagi~\cite{T74} in dimension $3$. Since then, numerous examples have been discovered. We refer the reader to~\cite{BKV} and the references therein for further details. On the other hand, we also note that, under suitable additional assumptions, curvature homogeneity may imply local homogeneity -- see e.g.~\cite{Sek}. We prove that, in dimension $3$, the existence of an AKS allows one to conclude local homogeneity from curvature homogeneity:

\begin{maintheorem}\label{thm:locallyhom}
    Let $(M,g)$ be a connected and oriented curvature-homogeneous Riemannian $3$-manifold.
    If $M$ admits an AKS, then it is locally homogeneous. 
\end{maintheorem}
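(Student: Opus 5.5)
In dimension $3$ the curvature tensor is determined by the Ricci tensor. Curvature homogeneity is therefore equivalent to the Ricci eigenvalues $\rho_1,\rho_2,\rho_3$ being constant on $M$. The plan is to use the AKS equation~\eqref{eq:ars} to show that, in a suitable local orthonormal eigenframe $(e_1,e_2,e_3)$ of $\Ric$, all the connection coefficients $\Gamma_{ij}^k=g(\nabla_{e_i}e_j,e_k)$ are constant. A classical argument (Singer's infinitesimal criterion, or the Cartan--Ambrose--Hicks viewpoint: a frame with constant connection coefficients and constant curvature components gives a local Lie group structure) then yields local homogeneity.

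We split into cases according to the multiplicities of the Ricci eigenvalues.

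\textbf{All three eigenvalues coincide.} Then $g$ is Einstein, so in dimension $3$ it has constant sectional curvature and is locally homogeneous.

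\textbf{Exactly two eigenvalues coincide, say $\rho_1\ne\rho_2=\rho_3$.} The eigendistributions are smooth. We will use the integrability condition of the AKS.

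\textbf{Three distinct eigenvalues.} A smooth local eigenframe exists on all of $M$ since the eigenvalues are constant; orientation fixes its sign up to a discrete choice.

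In both non-Einstein cases the key tool is the integrability condition. Writing $A=\lambda\Ric+\mu\Id$, differentiating~\eqref{eq:ars} gives
\[
\tfrac14\sum_{k,l} R(X,Y,e_k,e_l)\,e_k\cdot e_l\cdot\psi=\big((\nabla_XA)Y-(\nabla_YA)X\big)\cdot\psi+\big(A(Y)\cdot A(X)-A(X)\cdot A(Y)\big)\cdot\psi .
\]
In dimension $3$ we use the identification $\Sigma M$ with quaternions, so that Clifford multiplication by $e_1e_2$ acts like $\pm e_3$ (volume form acting as a scalar, using the orientation). Then every term in this identity becomes Clifford multiplication by a vector acting on $\psi$. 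Since $\psi$ is non-zero, $v\cdot\psi=0$ forces $v=0$ in dimension $3$, because $v\cdot v\cdot\psi=-|v|^2\psi$. This turns the integrability condition into a genuine tensorial identity: $d^\nabla A(X,Y)$ equals an explicit algebraic expression in the constant eigenvalues $\rho_i$ and $\lambda,\mu$.

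Writing $(\nabla_{e_i}\Ric)e_j=\sum_k(\rho_j-\rho_k)\Gamma_{ij}^k e_k$ (valid because the eigenvalues are constant), the identity produces linear equations in the $\Gamma_{ij}^k$ with constant coefficients. In the three-distinct case, these should determine the off-diagonal coefficients $\Gamma_{ij}^k$ with $\{i,j,k\}=\{1,2,3\}$ as constants. I expect that also using $\nabla\psi$ directly, e.g.\ constancy of the vector field $\xi$ dual to $\langle e_i\cdot\psi,\psi\rangle$-type bilinears, fixes the remaining coefficients $\Gamma_{ij}^i$. Alternatively, one can combine this with the contracted second Bianchi identity $\div\Ric=\tfrac12 d\Scal=0$ and with the curvature components $R_{ijij}$ expressed through the $\Gamma$'s and their derivatives.

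In the two-equal-eigenvalue case, the same identity should force the $\rho_1$-eigenline to be geodesic and the relevant coefficients constant. We then choose the frame in the degenerate plane using the spinor (e.g.\ via the bilinear vector fields of $\psi$) to kill the gauge freedom. From there, either a local product with constant curvature or a constant-coefficient frame follows.

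The main obstacle is showing that \emph{all} connection coefficients are constant. This includes the components not controlled by $d^\nabla\Ric$, such as $\Gamma_{ii}^j$-type terms, which a priori enter only through curvature equations involving derivatives. I would first try to use a spinor-adapted frame: in dimension $3$, $\psi$ of constant length gives an orthonormal frame via $X\mapsto \langle X\cdot\psi, e_k\cdot\psi\rangle$-type bilinears. Its covariant derivative is algebraic in $A$ by~\eqref{eq:ars}, hence has constant coefficients relative to itself once the relation between this frame and the Ricci eigenframe is shown to be constant. That last relation I would extract from the integrability identity above.
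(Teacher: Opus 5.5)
\textbf{There is a genuine gap.} The decisive step, constancy of the connection coefficients, is only anticipated (``should'', ``I expect''), and what you anticipate is the reverse of what the identity gives.

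In a Ricci eigenframe put $h_{ijk}=g((\nabla_{e_i}\Ric)e_j,e_k)=(\rho_j-\rho_k)\Gamma_{ij}^k$. Reduced to vectors acting on $\psi$, your integrability identity says exactly two things: $h_{iij}=0$ for $i\neq j$, and $h_{ijk}-h_{jik}$ is constant for $\{i,j,k\}=\{1,2,3\}$. This is the same information the paper extracts from \eqref{eq:bas2}. So the terms you call the main obstacle, $\Gamma_{ii}^j$, are precisely the controlled ones: for distinct eigenvalues every eigenline is a geodesic. By contrast, $\Gamma_{12}^3,\Gamma_{21}^3,\Gamma_{31}^2$ satisfy only two independent affine relations, so one function is left free. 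The identity therefore does not ``determine the off-diagonal coefficients as constants''.

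The spinor-adapted frame does not help either. Its connection coefficients are linear in the matrix of $A$ in that frame, so their constancy is a restatement of what you want to prove.

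In the three-distinct case your route can be closed, but via curvature rather than the spinor. Once $\nabla_{e_j}e_j=0$, set $a=\Gamma_{12}^3$, $b=\Gamma_{21}^3$, $c=\Gamma_{31}^2$; a direct computation gives $\rho_1=-2bc$, $\rho_2=2ac$, $\rho_3=-2ab$. Distinctness forces all $\rho_i\neq 0$: if one vanished, the other two would force $a,b,c\neq0$. Hence $a^2=\rho_2\rho_3/(2\rho_1)$ and its analogues are non-zero constants, and the eigenframe has constant coefficients.

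\textbf{In the case $\rho_1\neq\rho_2=\rho_3$ your stated target is false in general.} The cylinder $S^2(r)\times\R\subset\R^4$ carries an RKS (a restricted parallel spinor, with $A=\tfrac r2\Ric$) and is curvature homogeneous. Yet it has no local orthonormal frame with constant connection coefficients. Such a frame would make it locally isometric to a $3$-dimensional Lie group with a left-invariant metric, and Milnor's formulas~\cite{M76} show that Ricci signature $(+,+,0)$ never occurs there. So no constant relation between the spinor frame and an eigenframe can be extracted from the integrability identity, and your alternative ``local product or constant-coefficient frame'' is asserted without argument.

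\textbf{How the paper avoids this.} It uses Yamato's criteria~\cite{Yam}, which never require determining the free function.
\begin{itemize}
\item For $\rho_1\neq\rho_2=\rho_3$, it suffices that $\|\nabla\mathrm{ric}\|$ be constant. This holds because $h_{123}=0$ there, and the constant differences then make every $h_{ijk}$ constant.
\item For distinct eigenvalues, it suffices that $P_i=\sum_j\rho_j h_{jij}=0$. Since $P$ involves only the components $h_{jji}$, which the AKS kills, the free function never enters.
\end{itemize}
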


Notice that, in Theorem~\ref{thm:locallyhom}, the assumption of curvature homogeneity can be weakened to the requirement that one of the eigenvalues of $\Ric$ be constant (this follows from Lemma~\ref{lem:AKSbasic}). 

Recently, Imada classified RKS on $3$-dimensional Lie groups~\cite{I26}. We conclude the article by giving a complete classification of invariant AKS on $3$-dimensional Lie groups equipped with a left-invariant metric, providing a large family of examples: 

\begin{maintheorem}\label{thm:mainlie}
    Let $G$ be a connected and oriented $3$-dimensional Lie group with Lie algebra $\mathfrak{g}$. Then, $G$ admits a left-invariant metric carrying an invariant AKS that is not an RKS if and only if $\mathfrak{g}$ is either $\mathfrak{su}(2)$ or $\mathfrak{sl}(2,\mathbb{R})$. 
\end{maintheorem}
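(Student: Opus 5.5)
We reduce the problem to algebra on the Lie algebra. Fix an orthonormal basis of $\mathfrak{g}$ and use Milnor frames. For unimodular $\mathfrak{g}$, choose $(e_1,e_2,e_3)$ with $[e_2,e_3]=\lambda_1 e_1$, $[e_3,e_1]=\lambda_2 e_2$, $[e_1,e_2]=\lambda_3 e_3$. In this frame the Ricci endomorphism is diagonal and the Levi-Civita connection is $\nabla_{e_i} e_j = \alpha_i\, e_i\times e_j$, with the $\alpha_i$ explicit in the $\lambda$'s. For non-unimodular $\mathfrak{g}$ we use the standard normal form: $e_3$ is orthogonal to the unimodular kernel $\mathfrak{u}=\mathrm{span}(e_1,e_2)$, and $\mathrm{ad}_{e_3}|_{\mathfrak{u}}$ has matrix $\begin{pmatrix}\alpha&\beta\\ \gamma&\delta\end{pmatrix}$ with $\alpha+\delta\neq 0$ and $\alpha\gamma+\beta\delta=0$.

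An invariant spinor is a constant vector $\psi\in\Sigma_3\cong\mathbb{C}^2$, on which $e_i$ act by (multiples of) Pauli matrices. The spinorial connection of a left-invariant metric is $\nabla_X\psi = \tfrac14\sum_{j<k} g(\nabla_X e_j,e_k)\, e_j e_k\cdot\psi$. In dimension $3$ we have $e_je_k\cdot = \pm e_l\cdot$ for $(j,k,l)$ cyclic, so $\nabla_X\psi = B(X)\cdot\psi$ for an explicit endomorphism $B$ of $\mathfrak{g}$. The AKS equation then reads
\[
\bigl(B - \lambda\Ric - \mu\Id\bigr)(X)\cdot\psi = 0 \quad \text{for all } X .
\]
The Clifford action $v\mapsto v\cdot\psi$ is injective on $\R^3$ for $\psi\neq 0$; more precisely, $\{e_i\cdot\psi\}$ together with $\psi$ span $\Sigma_3$ over $\R$. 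Hence the equation holds exactly when the symmetric part of $B$ equals $\lambda\Ric+\mu\Id$ and the antisymmetric part of $B$ is absorbed by $\psi$. Concretely, if $B=S+\omega$ with $\omega$ skew, identified with a vector $w$ via $\omega(X)=w\times X$, then $\omega(X)\cdot\psi$ can be rewritten as $\pm(X\cdot w\cdot + \produ{X,w})\psi$. This gives the condition that $w\cdot\psi$ be proportional to $i\psi$ or vanish. I would parametrise this carefully, since it is the only place where $\psi$ enters non-trivially.

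\emph{Unimodular case.} Here $B$ is symmetric and diagonal, $B=\mathrm{diag}(\beta_1,\beta_2,\beta_3)$ with $\beta_i=\pm\tfrac14(\lambda_j+\lambda_k-\lambda_i)$ up to the sign convention. Every constant spinor is then a generalised Killing spinor, and the AKS condition reduces to
\[
\beta_i = \lambda\, r_i + \mu, \qquad i=1,2,3,
\]
where $r_i = \tfrac12\bigl(\lambda_i^2-(\lambda_j-\lambda_k)^2\bigr)$ are the Ricci eigenvalues. This is a system of three equations in the unknowns $\lambda\neq 0$ and $\mu$.

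We analyse it by the number of distinct $r_i$.
\begin{enumerate}
\item If all $r_i$ coincide, the metric is Einstein, hence of constant curvature. Then $\Ric$ is a multiple of $\Id$, so the spinor is an RKS precisely when one can choose $\mu=0$. I would check whether a non-RKS choice is still possible when $\Ric\neq 0$; since $\lambda\Ric+\mu\Id$ is then just a rescaling, the non-RKS condition must be defined with care.
\item If the $r_i$ are not all equal, the existence of the affine relation forces the vectors $(\beta_i)$, $(r_i)$ and $(1,1,1)$ to be linearly dependent. This is a polynomial condition on $(\lambda_1,\lambda_2,\lambda_3)$.
\end{enumerate}
We then test the non-RKS requirement $\mu\neq 0$ and solve within each unimodular class: $\mathfrak{su}(2)$ ($+++$), $\mathfrak{sl}(2,\R)$ ($++-$), $\mathfrak{e}(2)$, $\mathfrak{e}(1,1)$, the Heisenberg algebra and $\R^3$. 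For $\mathfrak{su}(2)$ the Berger-type metrics $\lambda_1=\lambda_2\neq\lambda_3$ should give solutions with $\mu\neq 0$, and similarly for $\mathfrak{sl}(2,\R)$. For the remaining classes, one or more $\lambda_i$ vanish, and the determinant condition should force $\mu=0$.

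\emph{Non-unimodular case.} The vector $e_3$ produces a non-zero skew part of $B$ and the mean-curvature-type term $\alpha+\delta\neq 0$. Taking the trace gives the Dirac eigenvalue, and combining this with the skew condition on $\psi$ and the AKS identity of Lemma~\ref{lem:AKSbasic} (constant $\Scal$ and $|\Ric|^2$, automatic here) should lead to a contradiction. Alternatively, we can use the fact that a GKS on a non-unimodular group would have to come from the skew part being absorbed, which forces $\alpha+\delta=0$.

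The main obstacle is the case analysis in the unimodular classes with a vanishing $\lambda_i$, particularly $\mathfrak{e}(1,1)$ and the Heisenberg algebra. There I would explicitly compute the $3\times 3$ determinant
\[
\det\bigl[(\beta_i),\,(r_i),\,(1,1,1)\bigr]
\]
and show that every solution either has $\lambda$ undefined or has $\mu=0$. The second delicate point is the non-unimodular skew-part analysis. The converse direction is explicit: we exhibit Berger metrics on $\mathfrak{su}(2)$ and $\mathfrak{sl}(2,\R)$ for which the solution has $\mu\neq0$.
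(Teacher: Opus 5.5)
\textbf{The unimodular case.} Here your analysis is the paper's argument. In a Milnor frame every invariant spinor satisfies $\nabla_X\psi=B(X)\cdot\psi$ with $B(e_i)=\tfrac{d_i}{2}e_i$ and $\Ric(e_i)=2d_jd_k\,e_i$, where $d_i=\tfrac12(c_j+c_k-c_i)$. (I write $c_i$ for your structure constants to avoid the clash with the AKS coefficient $\lambda$.) The problem then reduces to the linear dependence of $(\beta_i)$, $(r_i)$, $(1,1,1)$, as you say.

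The computation you defer is short. $\det[(d_i),(d_jd_k),(1,1,1)]$ is alternating of degree three, hence equals $\pm(d_1-d_2)(d_2-d_3)(d_3-d_1)=\pm(c_1-c_2)(c_2-c_3)(c_3-c_1)$. So two constants coincide, say $c_1=c_2$. Solving then gives $\lambda=-1/(2c_3)$ and $\mu=c_1/2$. This requires $c_3\neq0$ and $c_1\neq c_3$, and the spinor is not an RKS iff $c_1\neq0$.

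The Einstein case you flag is harmless. There $A=c\Id$ is a multiple of the identity, and the Killing-spinor integrability condition gives $\Ric=8c^2\Id$. So $A=\tfrac{1}{8c}\Ric$ if $c\neq0$, and otherwise $A=0$ with $\Ric=0$. Either way the spinor is an RKS.

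\textbf{The gap: the non-unimodular case.} This part rests on a false criterion. As you note yourself, $\psi,e_1\cdot\psi,e_2\cdot\psi,e_3\cdot\psi$ form a real basis of $\Sigma_3$. Hence $(B-\lambda\Ric-\mu\Id)(X)\cdot\psi=0$ for all $X$ holds if and only if $B=\lambda\Ric+\mu\Id$. In particular the skew part of $B$ must vanish; nothing can be ``absorbed''.

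Your condition that $w\cdot\psi$ be proportional to $i\psi$ is no constraint at all. In dimension three every unit spinor satisfies $i\psi=v\cdot\psi$ for a unique unit vector $v$, so the condition can always be met by choosing $\psi$, and it manufactures false solutions. Take $[e_3,e_1]=e_1$, $[e_3,e_2]=e_2$, which is real hyperbolic space with $\Ric=-2\Id$. One finds that $B$ is purely skew with $w\propto e_3$. Your criterion, with $\mu=2\lambda$ and $\psi$ an eigenspinor of $e_3$, would then declare a parallel spinor on $\mathbb{H}^3$, which is absurd. Your fallback via the Dirac eigenvalue and Lemma~\ref{lem:AKSbasic} is only a hope, not an argument; in particular, constancy of $\Scal$ and $\|\Ric\|^2$ is automatic for left-invariant metrics.

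The missing step is one computation. With $\Gamma_{ijk}=g(\nabla_{e_i}e_j,e_k)$, the vector $w$ of the skew part of $B$ is a non-zero constant multiple of $H=\sum_i\nabla_{e_i}e_i$. The Koszul formula gives $g(H,X)=\Tr(\mathrm{ad}_X)$ for left-invariant $X$. Hence $B$ is symmetric exactly when $\mathfrak g$ is unimodular, and a non-unimodular group carries no invariant GKS at all. This is precisely the fact the paper imports from the earlier classification of invariant GKS on three-dimensional Lie groups.
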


Finally, we would like to remark that, in all examples of connected Riemannian spin manifolds carrying an AKS known to the authors, the Ricci endomorphism has at most two distinct eigenvalues, and therefore these eigenvalues are constant (as $\Scal$ and $\|\Ric\|^2$ are constant). In particular, by Theorem~\ref{thm:locallyhom}, all known oriented Riemannian $3$-manifolds admitting an AKS are locally homogeneous. This motivates the following problem:  

\begin{problem} 
Do there exist oriented Riemannian $3$-manifolds admitting an AKS that are not locally homogeneous?
\end{problem}

\section{Preliminaries}
We recall the basic notation for Riemannian spin manifolds and collect several facts that will be needed in the sequel when the manifold admits an affine Killing spinor; see~\cite{BFGK,MS141} for further details.

\subsection{Spin manifolds}
Let $(M,g)$ be an $n$-dimensional Riemannian manifold with a spin structure, and let $\Sigma M$ denote its spinor bundle. The metric $g$ induces a Hermitian metric $\produ{\cdot,\cdot}$ on $\Sigma M$. The Levi-Civita connection $\nabla$ of $g$ induces a covariant derivative on $\Sigma M$ which we also denote by $\nabla$, and this is compatible with $\produ{\cdot,\cdot}$. Clifford multiplication with tangent vector fields is parallel with respect to $\nabla$ and skew-Hermitian with respect to $\produ{\cdot,\cdot}$, that is, $\produ{X\cdot \psi,\phi} = - \produ{\psi , X\cdot\phi}$ for every $X \in \Gamma(TM)$ and $\psi,\phi\in\Gamma(\Sigma M)$. In particular, $\Re \produ{X\cdot\psi,\psi}=0$, where $\Re$ denotes real part.

We can define Clifford multiplication with $2$-forms as
\begin{equation*}\label{eq:cli2f}
    (X\wedge Y) \cdot \psi := X\cdot Y\cdot \psi + g(X,Y)\psi.
\end{equation*}
On the other hand, the skew-symmetry of Clifford multiplication and the basic Clifford identity $X\cdot Y + Y\cdot X + 2g(X,Y) = 0$ give us
\begin{equation*}\label{eq:clieq1}
   \Re \produ{X\cdot Y\cdot \psi,\psi} = -g(X,Y)\produ{\psi,\psi}
\end{equation*}
for all $X,Y\in \Gamma(TM)$ and $\psi\in\Gamma(\Sigma M)$. 

We denote by $\mathrm{R}$ the Riemann curvature tensor of $M$, defined by $\mathrm{R}_{X,Y}:=[\nabla_{X},\nabla_{Y}]-\nabla_{[X,Y]}$. The Ricci tensor, the Ricci endomorphism, and the scalar curvature are denoted by $\mathrm{ric}$, $\Ric$, and $\Scal$, respectively, where $\mathrm{ric}(X,Y):=g(\Ric(X),Y)$.
The curvature $\mathrm{R}^{\Sigma M}$ of the spinor bundle $\Sigma M$ is related to the Riemann curvature of $M$ by the following:
\begin{equation}\label{eq:spincurv}
    \mathrm{R}^{\Sigma M}_{X,Y} \psi = \frac{1}{2}\mathcal{R}(X\wedge Y)\cdot \psi,
\end{equation}
where $\mathcal{R}:\Lambda^2TM \to \Lambda^2 TM$ denotes the curvature operator on $2$-forms given as 
\[
g(\mathcal{R}(X\wedge Y),Z\wedge W):= g(\mathrm{R}_{X,Y}Z,W).
\]
With this convention, the curvature operator $\mathcal{R}$ of a space of constant sectional curvature $\kappa$ acts on $2$-forms as $-\kappa\,\Id$.

\subsection{Affine Killing spinors} 
We record several lemmas that will be used later.
\begin{lemma}[{\cite{MS141}}]\label{lem:basiclem}
       Let $(M,g)$ be an $n$-dimensional Riemannian spin manifold admitting a GKS $\psi$ with associated endomorphism $A$. Then, the following equations hold: 
       \begin{enumerate}[label={{\rm(\alph*)}}]
         \item\label{it:b1} $[(\nabla_X A)Y - (\nabla_Y A)X]\cdot \psi = 2 (A(X)\wedge A(Y)) \cdot \psi +\frac{1}{2}\mathcal{R}(X\wedge Y) \cdot \psi$,
         \item\label{it:b2} $\left(\sum_{i=1}^{n}e_i \wedge (\nabla_{e_i}A)X \right)\cdot \psi = \left(\frac{1}{2}\Ric(X) + 2 A^2(X) - 2\,\Tr(A) A(X)\right)\cdot\psi$,
         \item\label{it:b3} $\div A + {\rm d}(\Tr(A)) = 0$,
         \item\label{it:b4} $(\Tr A)^2 - \Tr A^2 = \frac{1}{4}\Scal$. 
       \end{enumerate} 
Here, $\div A$ denotes the divergence of $A$, and is given by $\div A := - \sum_{i=1}^n(\nabla_{e_i} A)e_i$ for a local orthonormal frame $(e_1 , \dots , e_n)$. \qed
\end{lemma}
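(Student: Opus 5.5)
We obtain all four identities from the curvature of the spinor bundle, computed once through the GKS equation and once through~\eqref{eq:spincurv}. Throughout we work at a point with a local orthonormal frame $(e_i)$ that is $\nabla$-parallel there, and we use that $A$ is symmetric.

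For \ref{it:b1}, we compute
\[
\nabla_X\nabla_Y\psi=(\nabla_XA)Y\cdot\psi+A(\nabla_XY)\cdot\psi+A(Y)\cdot A(X)\cdot\psi .
\]
Antisymmetrising in $X,Y$ and subtracting $\nabla_{[X,Y]}\psi$ gives
\[
\mathrm{R}^{\Sigma M}_{X,Y}\psi=[(\nabla_XA)Y-(\nabla_YA)X]\cdot\psi+(A(Y)\cdot A(X)-A(X)\cdot A(Y))\cdot\psi .
\]
By the Clifford relation,
\[
A(Y)\cdot A(X)-A(X)\cdot A(Y)=-2\,A(X)\cdot A(Y)-2g(AX,AY)=-2\,(A(X)\wedge A(Y)).
\]
Combining this with~\eqref{eq:spincurv} yields \ref{it:b1}.

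For \ref{it:b2}, we put $Y=e_i$ in \ref{it:b1}, Clifford-multiply on the left by $e_i$ and sum over $i$.
\begin{itemize}
\item On the curvature side we use the standard identity $\sum_i e_i\cdot\mathcal{R}(X\wedge e_i)\cdot\psi=\pm\Ric(X)\cdot\psi$, which follows from the first Bianchi identity with this sign convention for $\mathcal{R}$.
\item The term $\sum_i e_i\cdot(A(X)\wedge A(e_i))$ we expand using $e\cdot(u\wedge v)=e\wedge u\wedge v-g(e,u)v+g(e,v)u$. The $3$-form part vanishes by the symmetry of $A$, since $\sum_i e_i\wedge A e_i=0$. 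What remains is $-A^2(X)+\Tr(A)A(X)$, up to sign.
\item On the left-hand side, $\sum_i e_i\cdot(\nabla_XA)e_i=-\Tr(\nabla_XA)=-X(\Tr A)$, and $\sum_i e_i\cdot(\nabla_{e_i}A)X$ splits into a $2$-form part and a scalar part.
\end{itemize}
Rearranging gives \ref{it:b2}, together with a leftover scalar identity: the scalar part of the left-hand side is $g(\div A+{\rm d}\Tr A,X)\psi$ and must vanish.

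To extract \ref{it:b3}, we take the real part of the inner product of that scalar identity with $\psi$. Clifford multiplication by a vector, and by a $2$-form, is skew-Hermitian, so the $1$-form and $2$-form terms are purely imaginary. The scalar term is real, and $|\psi|\neq0$, so its coefficient $g(\div A+{\rm d}\Tr A,X)$ vanishes.

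For \ref{it:b4}, we contract once more: apply $\sum_j e_j\cdot$ to the relation with $X=e_j$ and take the real part against $\psi$. This gives a scalar identity in $\Scal$, $\Tr A^2$ and $(\Tr A)^2$, in which the derivative terms drop out by \ref{it:b3}. Alternatively, we can square the Dirac equation $D\psi=-\Tr(A)\psi$ and compare with the Schrödinger–Lichnerowicz formula $D^2=\nabla^*\nabla+\tfrac14\Scal$, using $\nabla^*\nabla\psi=\Tr(A^2)\psi+\dots$ computed from the GKS equation.

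The main obstacle is consistent sign bookkeeping. This concerns both the curvature contraction $\sum_i e_i\cdot\mathcal{R}(X\wedge e_i)$ under the paper's convention that $\mathcal{R}=-\kappa\,\Id$ on space forms, and the separation of the Clifford products into real-scalar and imaginary parts.
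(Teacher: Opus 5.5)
Your argument is the standard derivation of these identities. The paper gives no proof of its own and simply cites \cite{MS141}. You compute the spinorial curvature from the GKS equation, apply one Clifford contraction for \ref{it:b2} and \ref{it:b3} and a second for \ref{it:b4}, and take real parts of $\produ{\cdot,\psi}$ to separate real scalar terms from the skew-Hermitian vector and $2$-form terms. That structure is correct.

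The one genuine hole is the sign you leave as $\pm$ in the Ricci contraction. It is not cosmetic: it fixes the sign of $\tfrac12\Ric(X)$ in \ref{it:b2} and of $\tfrac14\Scal$ in \ref{it:b4}. With the paper's conventions,
\[
\sum_i e_i\cdot\mathcal{R}(X\wedge e_i)\cdot\psi=-\Ric(X)\cdot\psi ,
\]
which is the identity quoted in the proof of Lemma~\ref{lem:prod}. A space form confirms the sign, since there $\mathcal{R}=-\kappa\,\Id$ and $\sum_i e_i\cdot(X\wedge e_i)=(n-1)X$. Your $A$-term $\sum_i e_i\cdot(A(X)\wedge A(e_i))=\Tr(A)A(X)-A^2(X)$ is already exact, with no sign ambiguity. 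The once-contracted identity is therefore
\[
\Big(\sum_i e_i\wedge(\nabla_{e_i}A)X\Big)\cdot\psi+g(\div A+{\rm d}\Tr A,X)\,\psi=\Big(\tfrac12\Ric(X)+2A^2(X)-2\Tr(A)A(X)\Big)\cdot\psi .
\]
You must extract \ref{it:b3} from this first, using your real-part argument, before \ref{it:b2} can be read off. A spinor identity cannot be split into its $2$-form and scalar parts just by rearranging.

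For \ref{it:b4}, the second contraction gives $\sum_j e_j\cdot\sum_i e_i\wedge(\nabla_{e_i}A)e_j=\div A+\mathrm{grad}\,\Tr A$, which vanishes by \ref{it:b3}. For symmetric $B$ one has $\sum_j e_j\cdot B(e_j)=-\Tr B$. Together these give $2(\Tr A)^2-2\Tr A^2-\tfrac12\Scal=0$, and Killing spinors on the unit sphere ($A=\pm\tfrac12\Id$) confirm this sign.

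Your Schr\"odinger--Lichnerowicz alternative is also valid, and it is slightly more economical for \ref{it:b3} and \ref{it:b4}. You have $D^2\psi=(\Tr A)^2\psi-\mathrm{grad}(\Tr A)\cdot\psi$ and $\nabla^*\nabla\psi=\Tr(A^2)\psi+\div A\cdot\psi$. Taking the real part gives \ref{it:b4}, and the remaining vector part gives \ref{it:b3}, both at once. The sign of $\Scal$ is then fixed by a formula whose conventions are unambiguous. This route does not yield \ref{it:b2}, however, so the curvature contraction with the correct sign is still needed there.
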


Substituting $A = \lambda \Ric + \mu\Id$ in Lemma~\ref{lem:basiclem}, we obtain the following: 

\begin{lemma}\label{lem:AKSbasic}
Let $(M,g)$ be an $n$-dimensional Riemannian spin manifold admitting an AKS $\psi$ satisfying~\eqref{eq:ars}. Then, the scalar curvature $\Scal$ and $\|\Ric\|^2$ are constant, and the following identities hold:

\begin{equation}\label{eq:bas1}
    \begin{aligned}
       &\lambda[(\nabla_X \Ric)Y - (\nabla_Y \Ric)X]\cdot \psi\\
       &= 2[\lambda^2\Ric X\wedge \Ric Y + \lambda\mu(\Ric X\wedge Y + X \wedge \Ric Y)+\mu^2 X\wedge Y] \cdot \psi+\frac{1}{2}\mathcal{R}(X\wedge Y) \cdot \psi,
    \end{aligned}
\end{equation}
\begin{equation}\label{eq:bas2}
    \begin{aligned}
        \lambda\Big(\sum_{i=1}^{n}e_i \wedge (\nabla_{e_i}\Ric)X\Big)\cdot \psi  &= 2\Big\{\lambda^2\Ric^2(X) + (\tfrac{1}{4}+(2-n)\lambda\mu -\lambda^2\Scal)\Ric(X)\\
        &~~~~~~~~~~~ +((1-n)\mu^2-\lambda\mu\Scal)X\Big\}\cdot\psi,
    \end{aligned}   
\end{equation}
\begin{equation}\label{eq:bas3}
    \|\Ric\|^2 = \Scal^2 + \tfrac{1}{\lambda^2}(2\lambda\mu(n-1)-\tfrac{1}{4})\Scal + n(n-1)(\tfrac{\mu}{\lambda})^2.
\end{equation} 
\end{lemma}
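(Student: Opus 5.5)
The plan is to substitute $A=\lambda\Ric+\mu\Id$ directly into Lemma~\ref{lem:basiclem} and simplify each item. Throughout we use $\Tr A=\lambda\Scal+n\mu$, $\nabla A=\lambda\nabla\Ric$ (since $\mu$ is constant and $\Id$ is parallel), and the fact that the wedge product is bilinear.

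\textbf{Item (a).} Lemma~\ref{lem:basiclem}\ref{it:b1} gives \eqref{eq:bas1} immediately. The left-hand side is $\lambda[(\nabla_X\Ric)Y-(\nabla_Y\Ric)X]$. On the right, expand
\[
A X\wedge AY=\lambda^2\Ric X\wedge\Ric Y+\lambda\mu(\Ric X\wedge Y+X\wedge \Ric Y)+\mu^2X\wedge Y .
\]

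\textbf{Item (b).} For \eqref{eq:bas2}, compute $A^2=\lambda^2\Ric^2+2\lambda\mu\Ric+\mu^2\Id$ and
\[
\Tr(A)A=(\lambda\Scal+n\mu)(\lambda\Ric+\mu\Id).
\]
Then $\tfrac12\Ric+2A^2-2\Tr(A)A$ equals $2$ times
\[
\lambda^2\Ric^2+\bigl(\tfrac14+2\lambda\mu-\lambda^2\Scal-n\lambda\mu\bigr)\Ric+\bigl(\mu^2-\lambda\mu\Scal-n\mu^2\bigr)\Id,
\]
which is exactly the stated coefficient pattern.

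\textbf{Item (c).} For \eqref{eq:bas3}, use Lemma~\ref{lem:basiclem}\ref{it:b4}. We have $(\Tr A)^2=\lambda^2\Scal^2+2n\lambda\mu\Scal+n^2\mu^2$ and $\Tr A^2=\lambda^2\|\Ric\|^2+2\lambda\mu\Scal+n\mu^2$. Setting their difference equal to $\tfrac14\Scal$, dividing by $\lambda^2$ (allowed since $\lambda\neq0$) and solving for $\|\Ric\|^2$ gives \eqref{eq:bas3}.

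\textbf{Constancy.} This is the only step that is not pure algebra. Apply Lemma~\ref{lem:basiclem}\ref{it:b3}. We have $\div A=\lambda\div\Ric$, and with the paper's sign convention the contracted Bianchi identity reads $\div\Ric=-\tfrac12{\rm d}\Scal$. Also ${\rm d}\Tr A=\lambda\,{\rm d}\Scal$. Hence
\[
-\tfrac{\lambda}{2}{\rm d}\Scal+\lambda\,{\rm d}\Scal=\tfrac{\lambda}{2}{\rm d}\Scal=0,
\]
so $\Scal$ is locally constant (constant on each component). By \eqref{eq:bas3}, $\|\Ric\|^2$ is then a polynomial in $\Scal$ with constant coefficients, so it is constant too.

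The only point needing care is this sign convention. Since $\div A=-\sum_i(\nabla_{e_i}A)e_i$, the usual identity $\sum_i(\nabla_{e_i}\Ric)e_i=\tfrac12{\rm d}\Scal$ becomes $\div\Ric=-\tfrac12{\rm d}\Scal$. Either sign choice gives a nonzero multiple of $\lambda\,{\rm d}\Scal$, so the constancy conclusion holds in any case.
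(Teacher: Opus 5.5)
Your proposal is correct and follows the paper's proof: you substitute $A=\lambda\Ric+\mu\Id$ into items (a), (b) and (d) of Lemma~\ref{lem:basiclem} to obtain the three identities, and you use item (c) together with the contracted Bianchi identity $\div\Ric=-\tfrac12{\rm d}\Scal$ to get $\tfrac{\lambda}{2}{\rm d}\Scal=0$, so that $\Scal$ is constant and then, by \eqref{eq:bas3}, so is $\|\Ric\|^2$. Your algebra checks out, and your remark that $\Scal$ is a priori only constant on each connected component is a harmless refinement of the paper's statement.
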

\begin{proof}
    Equations~\eqref{eq:bas1},~\eqref{eq:bas2} and~\eqref{eq:bas3} follow directly from (a), (b) and (d) of Lemma~\ref{lem:basiclem} respectively. Point (c) yields that $\lambda (\div (\Ric) + {\rm d} \Scal) = 0$. As $\lambda \neq 0$ and $\div (\Ric) = -\frac{1}{2} {\rm d} \Scal$ by the second Bianchi identity, we conclude that $\Scal$ is constant on $M$. By~\eqref{eq:bas3}, $\|\Ric\|^2$ is also constant. 
\end{proof}

\begin{remark}
    Notice that Lemma~\ref{lem:AKSbasic} determines Riemannian spin manifolds of dimension at most two admitting an AKS: in dimension one, the manifold is clearly flat; in dimension two, since $\Ric = \frac\Scal2\Id$ and $\Scal$ is constant, the manifold is Einstein and the spinor is necessarily a KS.
\end{remark}

We further have a technical lemma that will be needed for the proof of Theorem~\ref{thm:main}:  

\begin{lemma}\label{lem:prod}
    Let $(M,g)$ be a Riemannian spin manifold that is locally isometric to a Riemannian product of two non-Ricci-flat Einstein manifolds. Then, $(M,g)$ does not admit AKS.
\begin{proof} 
Suppose that there exists an open subset of $M$ which is isometric to a Riemannian product $U\times V$ of two non-Ricci-flat Einstein manifolds. We denote the Einstein constants on $U$ and $V$ by $a$ and $b$ respectively. It is clear that, for all $X\in\Gamma(TU)$ and $Y\in \Gamma(TV)$, $\nabla_X Y = 0$ and $\mathrm{R}(X,Y)=0$, and hence $\Ric (X)\in \Gamma(TU)$ and $\Ric (Y) \in \Gamma(TV)$, as well as $\mathcal{R}(X \wedge Y) = 0$.  Arguing by contradiction, assume that $(M,g)$ admits an AKS $\psi$. It follows from equation~\eqref{eq:bas1} that, for all $X\in\Gamma(TU)$ and $Y\in \Gamma(TV)$,
\[
0 = (\lambda^2 ab + \lambda\mu(a+b)+\mu^2)X\wedge Y\cdot \psi = (\lambda a+\mu)(\lambda b +\mu)X\cdot Y\cdot \psi.
\]
If $(\lambda a + \mu)(\lambda b + \mu) \neq 0$, we can conclude that $X \cdot Y \cdot \psi = 0$. Multiplying by $Y \cdot X$ on both sides, we get that $g(X,X) g(Y,Y) \psi = 0$. Since both factors $U$ and $V$ are non-trivial, this implies that $\psi$ has zeros, which is a contradiction. Hence, $(\lambda a + \mu)(\lambda b + \mu) = 0$. Without loss of generality, suppose that $\lambda a + \mu = 0$. Then, for $X \in \Gamma(TU)$, $\nabla_X \psi = (\lambda \Ric (X) + \mu X) \cdot \psi = (\lambda a + \mu) X \cdot \psi = 0$. Hence, by~\eqref{eq:spincurv}, for every $X,Y \in \Gamma(TU)$, $\mathcal{R}(X \wedge Y) \cdot \psi = 0$. Finally, let $(e_1 , \dots , e_m)$ and $(e_{m+1},\dots,e_{n})$ be local orthonormal frames of $TU$ and $TV$ respectively, and let $X \in \Gamma(TU)$. There is a classical identity~\cite[eq. 1.13]{BFGK} 
\[
-\Ric(X) \cdot \psi = \sum_{i=1}^{n} e_i \cdot \mathcal{R}(X \wedge e_i) \cdot \psi. 
\]
However, for all $1 \le i \le m$, $\mathcal{R}(X \wedge e_i) \cdot \psi = 0$, and for all $m+1 \le i \le n$ the product structure yields $\mathcal{R}(X \wedge e_i) = 0$. Hence, $\Ric(X) = 0$, and so $U$ is Ricci-flat. This yields a contradiction, thereby completing the proof.     
\end{proof}
\end{lemma}

We conclude this section by providing an explicit example of a Riemannian spin manifold admitting an AKS that is not a KS, a WKS or an RKS: 
\begin{example}
    Consider the Lie algebra $\mathfrak{su}(2)$, generated by $e_1,e_2,e_3$ with Lie bracket 
    \[
    [e_1,e_2] = \frac{1}{2} e_3 , \quad [e_2,e_3] = 2 e_1 , \quad [e_3 , e_1] = 2 e_2 , 
    \]
    and take the inner product $B$ and orientation determined by declaring $(e_1 , e_2 , e_3)$ to be orthonormal and positively oriented. Note that this metric is not Einstein, and in particular it does not correspond to the round metric on $\mathrm{SU}(2) \cong \mathbb{S}^3$. 
    Then, every invariant spinor on a connected Lie group with Lie algebra $\mathfrak{su}(2)$ equipped with the left-invariant metric induced by $B$ is an AKS with endomorphism $A = - \Ric + \Id$ (see the proof of Theorem~\ref{thm:mainlie}). 
\end{example}

\begin{remark}
    Kim and Friedrich~\cite[Ex. 8.4]{KF} showed the existence of non-trivial WKS on certain constant-scalar-curvature Sasakian deformations of the round sphere $\mathbb{S}^3$. These spinors are, in particular, AKS. Moreover, these deformations are left-invariant, and so these examples of WKS can be seen as special cases of the construction given in the proof of Theorem~\ref{thm:mainlie}. 
\end{remark}

\section{Proofs of the main results}
In this section, we prove the theorems stated in Section~\ref{s:int}.

\subsection{Proof of Theorem~\ref{thm:main}}
For assertion~\ref{it:har}, since $\Ric$ is Codazzi, the left-hand side of equation~\eqref{eq:bas2} vanishes: for every vector field $X$,  
\[
\sum_{i=1}^{n} e_i \wedge \left( \nabla_{e_i} \Ric \right) (X) = \sum_{i=1}^{n} e_i \wedge \left( \nabla_{X} \Ric \right) (e_i) = 0 , 
\]
where the last equality follows from the symmetry of $\nabla_X \Ric$, which follows from the symmetry of $\Ric$. As $\psi$ is nowhere zero, the injectivity of Clifford multiplication implies that $\Ric$ is annihilated by a quadratic polynomial with constant coefficients. Thus, the eigenvalues of $\Ric$ are constant, and $\Ric$ has at most two distinct eigenvalues. 

If $\Ric$ has one eigenvalue, then $M$ is Einstein. Thus, let us assume that $\Ric$ has exactly two distinct constant eigenvalues, denoted by $\alpha$ and $\beta$, with corresponding eigendistributions $E^\alpha$ and $E^\beta$, respectively. We now show that these eigendistributions are parallel. This is a standard argument -- see e.g.~\cite[16.12]{Besse} -- which we include here for completeness. It suffices to prove that $E^\alpha$ is parallel; the same argument applies to $E^\beta$.
For any $X,Y\in E^\alpha$ and $Z\in E^\beta$, using the fact that $\Ric$ is Codazzi we obtain
\[
    0 = g((\nabla_Z \Ric)Y,X) = g((\nabla_Y \Ric)Z,X)=(\alpha-\beta)g(\nabla_Y X,Z),
\]
which implies that $\nabla_Y X \in E^\alpha$. Similarly, for any $X\in E^\alpha$ and $Y,Z\in E^\beta$, we get
\[
    0=g((\nabla_X \Ric)Y,Z) = g((\nabla_Y \Ric)X,Z) = (\alpha-\beta)g(\nabla_Y X,Z).
\]
Hence, $\nabla E^\alpha \subset E^\alpha$, that is, $E^\alpha$ is parallel. 
Consequently, $\Ric$ is parallel. Finally, by the local de Rham decomposition theorem, $M$ locally splits as a Riemannian product of two Einstein manifolds with distinct Einstein constants. Moreover, Lemma~\ref{lem:prod} forces one of the factors to be Ricci-flat. 

If one of the multiplicities is one, we may assume without loss of generality that $\dim E^\alpha = 1$, so that clearly $\alpha = 0$. From~\eqref{eq:bas2} applied to an $\alpha$-eigenvector, it follows that either $\mu = 0$ or $\Scal = \frac{\mu}{\lambda}(1-n)$. In the first case, $\psi$ is an RKS. Assume therefore that $\Scal = \frac{\mu}{\lambda}(1-n)$. Now, from~\eqref{eq:bas2} applied to a $\beta$-eigenvector, we obtain that $\beta = -\frac{1}{\lambda^2}(\tfrac{1}{4}+\lambda\mu)$. But then, as $\alpha$ has multiplicity one, we also have that $\beta = \tfrac{\Scal}{n-1} = -\tfrac{\mu}{\lambda}$. Comparing these two expressions for $\beta$ yields a contradiction.

We now prove assertion~\ref{it:lcf}. By Lemma~\ref{lem:AKSbasic}, the existence of an AKS forces the scalar curvature to be constant. In dimension $2$, this finishes the proof. In dimension $\ge 3$, local conformal flatness together with constancy of the scalar curvature implies that $M$ has harmonic curvature. Now, assertion~\ref{it:har} and~\cite[Prop.~3.6]{HL18} conclude the proof. However, we write it here for completeness. 

Using assertion~\ref{it:har}, the claim follows immediately when $\dim M=3$, as well as when $\dim M\ge 4$ and $M$ is not (locally) a product, since every locally conformally flat Einstein manifold has constant sectional curvature. It remains to consider the case where $\dim M =n\ge 4$ and $M$ is locally isometric to a Riemannian product $U\times V$, where $U$ is an Einstein manifold of dimension $p$ with Einstein constant $u$, and $V$ is a Ricci-flat manifold of dimension $q$. Since $M$ is locally conformally flat, for unit vector fields $X\in\Gamma(TU)$ and $Y\in\Gamma(TV)$ we have 
\[
0=g(R(X,Y)Y,X)=\frac{1}{n-2}\left(\mathrm{ric}(X,X)+\mathrm{ric}(Y,Y)-\frac{\Scal}{n-1}\right)
\]
and hence $u = \frac{\Scal}{n-1}$. As $\Scal = pu$ and $n=p+q$, it follows that $(p+q-1)u = pu$. If $q\ge 2$, this implies $u=0$, and so $M$ is flat. 
If $q=1$, then by restricting the Weyl curvature of $M$ to $U$, we conclude that $U$ has constant sectional curvature $\frac{\Scal}{(n-1)(n-2)}$, while $V$ is flat. 
\qed

\subsection{Proof of Theorem~\ref{thm:p1form}}
    Assume that $M$ admits an AKS $\psi$ satisfying~\eqref{eq:ars} and note that the scalar curvature $\Scal$ is constant by Lemma~\ref{lem:AKSbasic}. Then, the Dirac operator $D$ satisfies $D\psi = -(\lambda\Scal + n\mu)\psi$. Let $\xi$ denote the unit vector field dual to a non-zero parallel one-form. As $\Ric(\xi)=0$, $\nabla_\xi \psi = \mu \,\xi\cdot \psi$. Applying~\cite[Lem.~4.5]{KF} we obtain $D(\nabla_\xi \psi) = \nabla_\xi (D\psi) = -\mu(\lambda\Scal + n\mu)\xi\cdot \psi$. But then, we also obtain $D(\nabla_\xi \psi ) = D(\mu \,\xi\cdot\psi) = \mu D(\xi\cdot \psi) = \mu(\lambda\Scal+(n-2)\mu) \xi \cdot \psi$. Comparing these two yields either $\mu=0$ or $\lambda\Scal + (n-1)\mu = 0$. If $\Scal=0$, then $\mu=0$, and hence $M$ is Ricci-flat by~\eqref{eq:bas3}, so that the spinor is parallel.
    We now assume that $\Scal \ne 0$. It follows that either $\mu=0$, in which case the spinor is an RKS, or $\Scal = \frac{\mu}{\lambda}(1-n)$.
    In the former case, from~\cite[Thm.~A]{I26} we obtain $\lambda^2\ge \frac{n}{4(n-1)\Scal}$. Moreover, equality holds if and only if equality is attained in the Cauchy-Schwarz inequality, which is equivalent to $M$ being Einstein with $\Ric= \frac{\Scal}{n}\Id$, and thus the spinor is a KS with Killing constant $\frac{\lambda\,\Scal}{n}$; that is, $\nabla_X \psi = \frac{\lambda\,\Scal}{n} X \cdot \psi$ for any $X\in \Gamma(TM)$. Now recall the integrability condition for a KS~\cite{BFGK}: $\Ric X = 4(\frac{\lambda\,\Scal}{n})^2(n-1)X$ for all $X\in \Gamma(TM)$. Since $\Ric(\xi)= 0$, we must have $\lambda = 0$, which is a contradiction. Therefore, the equality case cannot occur.
    In the latter case, equation~\eqref{eq:bas3} yields $\lambda^2\|\Ric\|^2 = \mu^2 (n-1) -\frac{1}{4}\Scal$. Applying the Cauchy-Schwartz inequality, $\Scal^2 \le n\|\Ric\|^2$, we obtain $\mu^2 + \frac{n\mu}{4\lambda} \ge  0$. Using $\Scal = \frac{\mu}{\lambda}(1-n)$, this is equivalent to $\mu^2 \ge \frac{n}{4(n-1)}\Scal$. As in the former case, the equality case can be ruled out by a similar argument. Furthermore, if $n=3$, we see that the spinor is a WKS, and by~\cite[Thm.~4.5]{KF} such a spinor cannot exist. This completes the proof.
    \qed

\subsection{Proof of Theorem~\ref{thm:locallyhom}}

Since in dimension $3$ the Riemann curvature tensor is fully determined by $\Ric$, curvature homogeneity is equivalent to the eigenvalues of $\Ric$ being constant. Let $\lambda_1 , \lambda_2 , \lambda_3$ be the three eigenvalues of $\Ric$, and let $(e_1 , e_2 , e_3)$ be an oriented local orthonormal frame such that $\Ric (e_i) = \lambda_i e_i$. Define $h_{ijk} := g\left( \left( \nabla_{e_i} \Ric \right) (e_j) , e_k \right) =(\nabla_{e_i}\mathrm{ric})(e_j,e_k)$ and observe that the symmetry of $\Ric$ implies that for all $i,j,k$
\begin{equation}\label{eq:hsym}
h_{ijk} = h_{ikj} . 
\end{equation}

The proof is divided into three cases:
\begin{itemize}
    \item If $\lambda_1 = \lambda_2 = \lambda_3$, then $(M,g)$ has constant sectional curvature, and hence it is locally homogeneous. 
    \item If $\lambda_1 \neq \lambda_2 = \lambda_3$, local homogeneity is equivalent to the tensor $\nabla \mathrm{ric}$ having constant norm~\cite[Thm.~B and Prop.~2.1]{Yam}. 
    \item If $\lambda_1,\lambda_2,\lambda_3$ are all distinct, to prove local homogeneity it is enough to show that the one-form $P$ defined by 
\[
P_i := \mathrm{ric}^{jk} \left( \nabla_{k} \mathrm{ric} \right)_{ij}
\]
is identically zero~\cite[Lem.~3.1 and Lem.~2.5]{Yam}.
\end{itemize}

We will now show that the existence of an AKS imposes additional symmetry conditions on $h$, which will force $\nabla \mathrm{ric}$ to have constant norm if $\lambda_1 \neq \lambda_2 = \lambda_3$ and $P$ to vanish if $\lambda_1,\lambda_2,\lambda_3$ are all distinct. 

Let $\psi$ be an AKS satisfying~\eqref{eq:ars}. 
Now, let $(i,j,k)$ be an even permutation of $(1,2,3)$ and apply equation~\eqref{eq:bas2}
to $X = e_i$. As we are in dimension $3$, $e_i \cdot e_j \cdot \psi = e_k \cdot \psi$, and hence the left-hand side can be expressed as $V \cdot \psi$, for some vector field $V$. Moreover, the right-hand side is also of the form $W \cdot \psi$ for some vector field $W$. As $\psi$ is nowhere-vanishing, this implies that $V = W$. Equating the $j$-th components of $V$ and $W$ and using that $\lambda \neq 0$, we get that $h_{kii} - h_{iik} = 0$. 
By cyclically permuting the indices, we obtain that for all $i,j$: 
\begin{equation}\label{eq:hAKS1}
h_{jji}=h_{ijj} . 
\end{equation}
Similarly, equating the $i$-th components of $V$ and $W$, we get that
\begin{equation}\label{eq:hAKS2}
h_{jik} - h_{kij} = C_{ijk} , 
\end{equation}
for some constant $C_{ijk}$.  
Note that  
\[
\| \nabla \mathrm{ric} \|^2 = \sum_{i,j,k} h_{ijk}^2 .
\]
By the symmetries~\eqref{eq:hsym},~\eqref{eq:hAKS1} and~\eqref{eq:hAKS2} of $h$, to show that $\nabla \mathrm{ric}$ has constant norm it is enough to show that $h_{123}$ and $h_{ijj}$ are constant for all $i,j$. Indeed, 
\begin{equation}\label{eq:hijj0}
\begin{aligned}
h_{ijj} &= g \left( \left( \nabla_{e_i} \Ric \right) (e_j) , e_j \right) = g \left( \nabla_{e_i} \left(\Ric (e_j) \right) - \Ric \left( \nabla_{e_i} e_j \right) , e_j \right) \\
&= g \left( \left( \lambda_j \Id - \Ric \right) \nabla_{e_i} e_j , e_j \right) = g \left( \nabla_{e_i} e_j , \left( \lambda_j \Id - \Ric \right) e_j \right) = 0. 
\end{aligned}
\end{equation}
Moreover, if $\lambda_1 \neq \lambda_2 = \lambda_3$,  
\begin{equation*}
    \begin{aligned}
        h_{123} &= g \left( \left( \nabla_{e_1} \Ric \right) (e_2) , e_3 \right) = g \left( \left( \nabla_{e_1} \Ric (e_2) \right) - \Ric \left( \nabla_{e_1} e_2 \right) , e_3 \right) \\
        &= g \left( \left( \lambda_2 \Id - \Ric \right) \left( \nabla_{e_1} e_2 \right) , e_3 \right) = (\lambda_2 - \lambda_3) g \left( \nabla_{e_1} e_2 , e_3 \right) = 0 , 
    \end{aligned}
\end{equation*}
and hence $\nabla \mathrm{ric}$ has constant norm. 

We now suppose that $\lambda_1,\lambda_2,\lambda_3$ are all distinct. Observe that, by~\eqref{eq:hsym},~\eqref{eq:hAKS1} and~\eqref{eq:hijj0}, 
\[
P_i = \sum_{j=1}^{3} \lambda_j h_{jij} = \sum_{j=1}^{3} \lambda_j h_{jji} = \sum_{j=1}^{3} \lambda_j h_{ijj} = 0,
\] 
for all $i=1,2,3$, which completes the proof.\qed

\subsection{Proof of Theorem~\ref{thm:mainlie}}

In~\cite{Art}, the first author classified all invariant GKS on $3$-dimensional Lie groups equipped with a left-invariant metric. In particular, it was shown that, given a connected and oriented $3$-dimensional Lie group $G$ with Lie algebra $\mathfrak{g}$ equipped with a left-invariant Riemannian metric $g$ induced from an inner product $B$ on $\mathfrak{g}$, there exists a left-invariant endomorphism $A$ of $TG$ such that every left-invariant spinor $\psi$ satisfies 
\(
\nabla_X \psi = A(X) \cdot \psi, 
\)
for all vector fields $X$. It was shown that, moreover, the $g$-symmetry of $A$ depends only on $\mathfrak{g}$, and not on $B$. Specifically, $A$ is symmetric if and only if $\mathfrak{g}$ is unimodular. Hence, unimodular $3$-dimensional Lie groups are the only ones that admit non-trivial invariant GKS. 

Following~\cite[Sec. 4]{M76}, suppose that $\mathfrak{g}$ is unimodular, and let $(e_1,e_2,e_3)$ be an oriented $B$-orthonormal frame satisfying 
\[
[e_1 , e_2] = c_3 e_3 , \quad [e_2 , e_3] = c_1 e_1 , \quad [e_3 , e_1] = c_2 e_2 , 
\]
for some $c_1 , c_2 ,c_3 \in \R$. Letting  
\[
d_i := \frac{1}{2} ( c_1 + c_2 + c_3 ) - c_i , 
\]
one can see that the Ricci endomorphism satisfies 
\[
\Ric(e_1) = 2 d_2 d_3 e_1 , \quad \Ric(e_2) = 2 d_3 d_1 e_2 , \quad \Ric(e_3) = 2 d_1 d_2 e_3 . 
\]
Finally, computing as in~\cite{Art}, the endomorphism $A$ is given by $A(e_i) = (d_i / 2) \, e_i$, for all $i = 1 , 2 , 3$. Therefore, $A$ is of the form $\lambda \Ric + \mu \Id$ and not of the form $\lambda \Ric$ or $\mu \Id$ if and only if 
\begin{equation}\label{eq:conditions}
\mathrm{rank} \begin{pmatrix}
d_2 d_3 & 1\\
d_3 d_1 & 1 \\
d_1 d_2 & 1 
\end{pmatrix} = \mathrm{rank} \begin{pmatrix}
d_2 d_3 & d_1\\
d_3 d_1 & d_2 \\
d_1 d_2 & d_3 
\end{pmatrix} = 2 \qquad \text{and} \qquad \det \begin{pmatrix}
d_2 d_3 & 1 & d_1 \\
d_3 d_1 & 1 & d_2 \\
d_1 d_2 & 1 & d_3
\end{pmatrix} = 0 .  
\end{equation}

Up to reordering, the conditions in~\eqref{eq:conditions} hold if and only if 
\[
d_3 d_1 \neq d_1 d_2 \, , \quad d_1 = d_2 \, , \quad \text{and} \quad  d_1^2 \neq d_3^2 \, , 
\]
because the determinant is of Vandermonde type. One easily sees that these conditions are equivalent to 
\[
0 \neq d_1 = d_2 \neq d_3 \quad  \text{and} \quad d_1 \neq -d_3 \, .
\]
In terms of the Milnor constants $c_i$, this is equivalent to 
\[
c_1 = c_2 \neq c_3 \quad \text{and} \quad c_1 , c_2 , c_3 \neq 0 \, . 
\]
By~\cite[p.~307]{M76}, this happens precisely on the Lie algebras $\mathfrak{su}(2)$ and $\mathfrak{sl}(2,\mathbb{R})$. \qed

\bibliographystyle{alphaurl}
\bibliography{references.bib}

\end{document}